\documentclass[11pt,a4paper]{amsart}
\usepackage{enumerate}
\usepackage{amssymb}
\usepackage{mathtools}

\usepackage{cancel,xspace}
\usepackage{epsfig,fancybox}
\usepackage{graphicx,mathrsfs,ifpdf}
\usepackage{setspace}
\usepackage{xcolor}
\usepackage{cleveref}
\usepackage{todonotes}

\newtheorem*{theorem*}{Theorem}
\newtheorem{theorem}{Theorem}
\newtheorem{lemma}[theorem]{Lemma}

\theoremstyle{definition}

\renewcommand{\theclaim}{\arabic{claim}}

\newtheorem*{claim*}{Claim}
\newenvironment{proofofclaim*}[1][\proofname\ of the claim]{%
  \proof[#1]%
  
}{\endproof}

\numberwithin{equation}{section}
\numberwithin{figure}{section}

\newcommand{\RP}{\mathbb R \textup P}
\newcommand{\RR}{\mathbb R}
\newcommand{\ZZ}{\mathbb Z}

\DeclareMathOperator{\sd}{sd}

\DeclareMathOperator{\lk}{lk}
\DeclareMathOperator{\st}{st}

\begin{document}
\title{Fan's lemma via bistellar moves}
\author{Tom\'a\v s Kaiser}
\address{Department of Mathematics and European Centre of Excellence NTIS (New
Technologies for the Information Society), University of West Bohemia, Pilsen,
Czech Republic.}
\email{kaisert@kma.zcu.cz}
\author{Mat\v ej Stehl\'ik}
\thanks{The second author was partially supported by ANR project HOSIGRA
(ANR-17-CE40-0022).}
\address{Université Paris Cité, CNRS, IRIF, F-75006, Paris, France.}
\email{matej@irif.fr}
\date{\today}

\begin{abstract}
Pachner proved that all closed combinatorially equivalent combinatorial
manifolds can be transformed into each other by a finite sequence of bistellar
moves. We prove an analogue of Pachner's theorem for combinatorial manifolds
with a free $\ZZ_2$-action, and use it to give a combinatorial proof of Fan's
lemma about labellings of centrally symmetric triangulations of spheres.
Similarly to other combinatorial proofs, we must assume an additional property
of the triangulation for the proof to work. However, unlike the other
combinatorial proofs, no such assumption is needed for dimensions at most $3$.
\end{abstract}
\maketitle

\section{Introduction}
\label{sec:introduction}

Given a simplicial complex $K$ and a labelling $\lambda : V(K) \to \ZZ\setminus
\{0\}$, an $n$-simplex of $K$ is \emph{alternating} (with respect to $\lambda$)
if the $n+1$ vertices are assigned distinct labels, and their signs alternate by
increasing absolute value. An alternating simplex is \emph{positive} or
\emph{negative} if the label of minimum absolute value is positive or negative,
respectively. The following result, proved by Fan~\cite{Fan52} in 1952, is
usually referred to in the literature as \emph{Fan's lemma} (see
also~\cite{DLGMM19,dL13}).

\begin{theorem}[Fan]
\label{thm:fan}
Let $K$ be a centrally symmetric triangulation of $S^n$, and $\lambda: V(K) \to
\{\pm 1,\ldots,\pm m\}$ a labelling of the vertices of $K$ such that
$\lambda(-v) = -\lambda(v)$ for every vertex $v \in V(K)$, and
$\lambda(u)+\lambda(v) \neq 0$ for every edge $\{u,v\} \in K$. Then there are an
odd number of positive alternating $n$-simplices. In particular, $m \geq n+1$.
\end{theorem}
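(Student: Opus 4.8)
The plan is to show that the parity $\phi(K,\lambda)\in\{0,1\}$ of the number of positive alternating $n$-simplices is invariant under equivariant bistellar moves, and then to evaluate it on the boundary of the cross-polytope. The heart of the matter is a local fact: if $T=\partial\Delta^{n+1}$ is the boundary of an $(n+1)$-simplex equipped with any labelling $\mu$ of its $n+2$ vertices satisfying $\mu(u)+\mu(v)\neq 0$ for every edge $\{u,v\}$, then the number of alternating $n$-faces of $T$ is even --- in fact $0$ or $2$. Since the $1$-skeleton of $T$ is complete, no two of its labels are negatives of one another; list the labels by increasing absolute value and record the resulting word of signs. An $n$-face of $T$ is obtained by deleting one vertex, and it is alternating exactly when the remaining $n+1$ labels are distinct and their signs alternate in order of absolute value; a short case analysis --- on the number of places where two consecutive signs of the full word agree, and on whether $T$ has a repeated label --- shows that exactly $0$ or $2$ of the $n+2$ faces have this property.

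This yields the invariance. First, applying the edge condition to $\{v,-v\}$ and using $\lambda(-v)=-\lambda(v)$ shows $\{v,-v\}\notin K$, so the $\ZZ_2$-action is free and simplicial. An equivariant bistellar move replaces, near a simplex $A$, a ball $A*\partial B$ by $\partial A*B$, and carries out the antipodal replacement near $-A$; here $\partial(A*B)$ is the boundary of an $(n+1)$-simplex, and its $n$-faces are exactly the facets deleted near $A$ together with the facets created near $A$. Because $\lambda(-w)=-\lambda(w)$, the antipodal map interchanges positive and negative alternating $n$-faces; hence the change in the number of \emph{positive} alternating $n$-simplices contributed by the part of the move near $A$, plus the change contributed near $-A$, equals the change in the number of \emph{all} alternating $n$-faces inside $\partial(A*B)$, which is even by the local fact. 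So, as long as the move produces a triangulation with no complementary edge, $\phi$ is unchanged.

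It remains to bring $K$ into a form where $\phi$ is transparent. By the equivariant analogue of Pachner's theorem proved in this paper --- applicable under the additional hypothesis on $K$, which is vacuous for $n\le 3$ --- the centrally symmetric triangulation $K$ can be converted into $\partial\cp^{n+1}$, the boundary of the $(n+1)$-dimensional cross-polytope, by a finite sequence of equivariant bistellar moves; we carry $\lambda$ along, giving each new vertex a label (with a fresh absolute value if necessary) that creates no complementary edge, so that $\phi$ is preserved throughout. On $\partial\cp^{n+1}$ the edge condition forces the $n+1$ antipodal pairs of vertices to receive $n+1$ distinct absolute values; a facet selects one vertex from each pair, so its label set amounts to a choice of sign for each of these $n+1$ values, and it is alternating exactly when the signs alternate in order of absolute value. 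Precisely two facets qualify --- one positive and one negative --- so $\phi(\partial\cp^{n+1},\cdot)=1$, and therefore $\phi(K,\lambda)=1$: the number of positive alternating $n$-simplices is odd. Since such a simplex has $n+1$ vertices with distinct absolute values drawn from $\{1,\dots,m\}$, we conclude $m\ge n+1$.

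The main obstacle is this last reduction, not the invariance. One must realise the sequence of equivariant bistellar moves so that the no-complementary-edge condition survives every step: this is immediate for moves that introduce a vertex, but a bistellar move performed at an $(n-1)$-dimensional face replaces the two facets meeting at that ridge by a fan of facets around a new edge, and this edge --- joining two vertices that were previously unrelated --- may be complementary. Ruling this out, together with the verification for $n\ge 4$ that the equivariant Pachner theorem applies to $K$ in the first place, is exactly where the extra hypothesis on $K$ is used; by contrast, the local fact and the ensuing invariance of $\phi$ are elementary.
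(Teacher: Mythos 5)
Your overall strategy is the same as the paper's: a local parity fact on $\partial(A\star B)$, invariance of the parity of positive alternating facets under $\ZZ_2$-bistellar moves (your trick of using antipodality to convert the change in the \emph{positive} count into the change in the \emph{total} alternating count over $\partial(A\star B)$ and $\partial(-A\star -B)$ is a slightly cleaner bookkeeping than the paper's case analysis on $(\alpha^+,\alpha^-)$, but it is the same fact), then the equivariant Pachner theorem and the evaluation on the cross-polytope. Like the paper, you only obtain the theorem under the extra hypothesis that $K$ is combinatorially $\ZZ_2$-equivalent to the boundary of the cross-polytope, which is automatic for $n\le 3$; that is not a defect relative to the paper.

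The genuine gap is exactly the point you flag in your last paragraph and then dispose of incorrectly. When a $\ZZ_2$-bistellar move $\tilde\kappa(A,B)$ has $\dim B=1$, the new edge $B=\{u,v\}$ joins two \emph{pre-existing} vertices, and nothing prevents $\lambda(u)+\lambda(v)=0$; in that case the carried-along labelling is not a Fan labelling of $M'$ and your local fact fails on $A\star B$ (its $1$-skeleton now contains a complementary pair, so the ``list the labels by increasing absolute value'' argument breaks down). You assert that ``ruling this out \dots is exactly where the extra hypothesis on $K$ is used,'' but the extra hypothesis only guarantees that \emph{some} sequence of $\ZZ_2$-bistellar moves to the cross-polytope exists; it gives no control whatsoever over which edges those moves create, and there is no reason the sequence can be chosen to avoid complementary edges. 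The paper's resolution is different and is the one missing idea in your write-up: it does not avoid the bad move but instead \emph{modifies the labelling}, replacing $\lambda(u)$ by $\lambda(u)+\varepsilon$ for a sufficiently small $\varepsilon>0$ (working momentarily with real labels, and doing the antipodal change at $-u$), so that $u$ and $v$ are no longer complementary while the relative order and signs of all labels are preserved; one then checks that this global relabelling does not change which facets outside $\st_{M'}(B)\cup\st_{M'}(-B)$ are positive alternating, so the parity comparison still goes through. Without this perturbation step (or some substitute for it), your induction along the sequence of moves is not valid, because the invariant $\phi$ is only defined for Fan labellings and you cannot guarantee that the intermediate complexes carry one.
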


The contraposition, known as \emph{Tucker's lemma}, was originally proved by
Tucker~\cite{Tuc46} for dimension $2$ and subsequently by Lefschetz~\cite{Lef49}
for arbitrary dimension (see also~\cite{DLGMM19,dL13,Mat03}).

\begin{theorem}[Tucker]
\label{thm:tucker}
Let $K$ be a centrally symmetric triangulation of $S^n$, and $\lambda: V(K) \to
\{\pm 1,\ldots,\pm n\}$ a labelling of the vertices of $K$ such that
$\lambda(-v) = -\lambda(v)$ for every vertex $v \in V(K)$. Then there exists an
edge $\{u,v\} \in K$ such that $\lambda(u)+\lambda(v)=0$.
\end{theorem}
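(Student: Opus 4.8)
The plan is to obtain Theorem~\ref{thm:tucker} as an immediate corollary of Fan's lemma (Theorem~\ref{thm:fan}), since Tucker's lemma is precisely its contrapositive. I would argue by contradiction: suppose $K$ is a centrally symmetric triangulation of $S^n$ and $\lambda : V(K) \to \{\pm 1,\ldots,\pm n\}$ is a labelling with $\lambda(-v) = -\lambda(v)$ for every vertex $v \in V(K)$, but with $\lambda(u)+\lambda(v) \neq 0$ for every edge $\{u,v\} \in K$. Then $\lambda$ satisfies all the hypotheses of Theorem~\ref{thm:fan} with $m = n$: its codomain is $\{\pm 1,\ldots,\pm m\}$, it is antipodally equivariant, and no edge has antipodal labels.

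Applying Theorem~\ref{thm:fan} yields $m \geq n+1$, that is, $n \geq n+1$, which is impossible. Hence the assumption cannot hold, so some edge $\{u,v\} \in K$ must satisfy $\lambda(u)+\lambda(v) = 0$, which is exactly the conclusion of Theorem~\ref{thm:tucker}.

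I do not expect any genuine obstacle here: the only thing to watch is the matching of parameters, namely that the bound $m$ in the codomain $\{\pm 1,\ldots,\pm m\}$ of Fan's lemma is taken equal to the sphere dimension $n$ of Tucker's lemma, and that the ambient spheres in the two statements coincide. All the substantive work — in particular the equivariant analogue of Pachner's theorem and the parity count behind Fan's lemma — is done in establishing Theorem~\ref{thm:fan}, and Theorem~\ref{thm:tucker} extracts nothing further from it. (This also explains why the paper concentrates on proving the stronger Theorem~\ref{thm:fan}.)
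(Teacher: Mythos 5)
Your proposal is correct and matches the paper's treatment: the paper offers no separate proof of Theorem~\ref{thm:tucker}, explicitly presenting it as the contraposition of Fan's lemma, and your derivation (no complementary edge would force $m=n\geq n+1$ via the ``in particular'' clause of Theorem~\ref{thm:fan}) is exactly that contrapositive spelled out.
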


Tucker's lemma can be viewed as a discrete analogue of the following theorem of
Borsuk~\cite{Bor33}, usually known as the \emph{Borsuk--Ulam theorem}.

\begin{theorem}[Borsuk]
There is no continuous mapping $f:S^n \to S^{n-1}$ such that $f(-x)=-f(x)$ for
all $x \in S^n$.
\end{theorem}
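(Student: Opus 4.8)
The plan is to derive Borsuk's theorem directly from Tucker's lemma (\Cref{thm:tucker}), which is the standard route once the combinatorial statement is in hand. Suppose for contradiction that $f\colon S^n\to S^{n-1}$ is continuous with $f(-x)=-f(x)$ for all $x\in S^n$, and view $S^{n-1}$ as the unit sphere in $\RR^n$. Since $\sum_{i=1}^n y_i^2=1$ for every $y\in S^{n-1}$, at least one coordinate satisfies $|y_i|\ge 1/\sqrt n$; let $i(y)$ be the least such index and set $\ell(y)=\operatorname{sign}(y_{i(y)})\cdot i(y)\in\{\pm 1,\ldots,\pm n\}$. Because $|(-y)_j|=|y_j|$ for all $j$, the same index is chosen for $y$ and $-y$ while the sign flips, so $\ell(-y)=-\ell(y)$; composing with the antipodal map $f$ gives $\ell(f(-v))=-\ell(f(v))$.

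Next I would fix the triangulation. By uniform continuity of $f$ on the compact space $S^n$, there is $\eta>0$ such that $\|x-x'\|<\eta$ implies $\|f(x)-f(x')\|<2/\sqrt n$. Take any centrally symmetric triangulation $K$ of $S^n$, realized geometrically on the sphere, all of whose edges have Euclidean length less than $\eta$; such a triangulation exists, for instance by iterated barycentric subdivision of the boundary complex of the $(n+1)$-dimensional cross-polytope (this complex is centrally symmetric, barycentric subdivision preserves the free $\ZZ_2$-action, and the mesh tends to $0$), radially projected onto $S^n$. Define $\lambda\colon V(K)\to\{\pm 1,\ldots,\pm n\}$ by $\lambda(v)=\ell(f(v))$; by the previous paragraph $\lambda(-v)=-\lambda(v)$.

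It remains to check that $\lambda$ has no edge $\{u,v\}\in K$ with $\lambda(u)+\lambda(v)=0$, which will contradict \Cref{thm:tucker}. If such an edge existed, say $\lambda(u)=i$ and $\lambda(v)=-i$ for some $i\in\{1,\ldots,n\}$, then $f(u)_i\ge 1/\sqrt n$ and $f(v)_i\le -1/\sqrt n$, so $\|f(u)-f(v)\|\ge |f(u)_i-f(v)_i|\ge 2/\sqrt n$; but $\{u,v\}\in K$ gives $\|u-v\|<\eta$, hence $\|f(u)-f(v)\|<2/\sqrt n$, a contradiction. Thus $\lambda$ satisfies the hypotheses of Tucker's lemma yet admits no balanced edge, contradicting its conclusion, so no antipodal $f\colon S^n\to S^{n-1}$ exists.

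The only nontrivial point — the part deserving care — is the passage from the continuous map to a combinatorial labelling: one must produce a geometric centrally symmetric triangulation of $S^n$ with controlled mesh and verify that the coordinate-domination labelling is simultaneously antipodally symmetric and free of balanced edges. Everything else is bookkeeping, since the genuine topological content has already been packaged into \Cref{thm:tucker}.
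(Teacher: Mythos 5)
Your argument is correct, and it is the standard (and essentially the only textbook) derivation of the Borsuk--Ulam theorem from Tucker's lemma: the coordinate-domination labelling $\ell(y)=\operatorname{sign}(y_{i(y)})\cdot i(y)$ is well defined because some coordinate of a unit vector in $\RR^n$ has absolute value at least $1/\sqrt n$, it is odd because $f$ is, and the mesh condition obtained from uniform continuity rules out complementary edges exactly as you say. The paper itself offers no proof of this statement --- Borsuk's theorem is quoted as a classical result with a citation, and the text merely remarks that Tucker's lemma implies it --- so you are supplying the missing reduction rather than duplicating or diverging from an argument in the paper. One point worth making explicit, since it ties your proof to the rest of the paper: the triangulation you construct (an iterated barycentric subdivision of the boundary complex of the cross polytope, radially projected) is not just \emph{some} centrally symmetric triangulation but one satisfying the hypothesis of \Cref{thm:fan-PL}; consequently your derivation of Borsuk's theorem rests only on the version of Tucker's/Fan's lemma that the paper actually proves combinatorially, and does not need the unrestricted \Cref{thm:tucker}. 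The only cosmetic caveat is that ``edges of length less than $\eta$'' should be read as chord lengths between the vertices placed on $S^n$, which indeed tend to $0$ under repeated barycentric subdivision.
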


The Borsuk--Ulam theorem has many important applications in combinatorics, so
much so that Matoušek wrote an entire book on the topic~\cite{Mat03}. Since the
Borsuk--Ulam theorem is implied by Tucker's (and, \emph{a fortiori}, by Fan's)
lemma, it ought to be possible---at least in principle---to `discretise' proofs
relying on the Borsuk--Ulam theorem by applying the lemmas of Tucker or Fan
instead (examples of this can be found, for example,
in~\cite{Mat04,Meu08,MS19,Zie02}). To obtain a truly `combinatorial' proof, one
then needs a combinatorial proof of the lemmas of Tucker and Fan.

A number of such proofs are known, but none work in full generality: they all
impose some extra condition on the triangulation (for a short degree-theoretic
proof of Fan's lemma with no extra conditions on the triangulation,
see~\cite{DLGMM19}). Several proofs require the triangulation to be a
subdivision of the boundary complex of the cross polytope; this is the case for
the original proof of Tucker's lemma given by
Lefschetz~\cite[pp.~135--137]{Lef49} and the original proof of Fan's lemma by
Fan~\cite{Fan52}. It is also required in the constructive proof of Tucker's
lemma by Freund and Todd~\cite{FT81} (reproduced in Matou\v
sek~\cite[pp.~37--40]{Mat03}), and in the proofs of Fan's lemma given by
Meunier~\cite{Meu08} and de Longueville~\cite[Theorem~1.8]{dL13}. Prescott and
Su~\cite{PS05} impose a less restrictive condition in their proof of Fan's
lemma: they require the triangulation to contain what they call a \emph{flag of
hemispheres}. The second proof of Tucker's lemma in Matou\v
sek~\cite[pp.~43--45]{Mat03} also requires this condition.

Can we do away with these extra conditions on the triangulation? Indeed,
\cite[Open Problem~2.4]{DLGMM19} asks for a direct combinatorial proof of Fan’s
and Tucker’s lemma valid for \emph{any} centrally symmetric triangulation of
$S^n$.

In this note, we give a new combinatorial proof of Fan's lemma using
\emph{bistellar moves} (also known as \emph{bistellar flips} or \emph{Pachner
moves}). The proof also imposes some extra conditions on the triangulation (see
\Cref{thm:fan-PL}), however, this only matters for dimensions $4$ and above.
Therefore, in dimensions $2$ and $3$, our proof establishes Fan's lemma for
\emph{all} centrally symmetric triangulations of the sphere (see the discussion
in \Cref{sec:conclusion}).

The rest of this paper is organised as follows. In the next section, after
introducing the basic terminology about simplicial complexes and combinatorial
manifolds, we introduce bistellar moves and state the fundamental theorem of
Pachner~\cite{Pac91} and its extension by Casali~\cite{Cas95}, and conclude the
section by proving a $\ZZ_2$-analogue of Pachner's theorem. In \Cref{sec:proof},
we prove Fan's lemma using bistellar moves. Finally, in \Cref{sec:conclusion},
we discuss the conditions our proof imposes on the triangulation.

\section{Terminology and preliminary results}
\label{sec:terminology}

For an introduction to topological combinatorics, we refer the reader to
Matou\v{s}ek~\cite{Mat03}, de Longueville~\cite{dL13} or Kozlov~\cite{Koz08}.

All the simplicial complexes considered in this paper are finite. The union of
all simplices in a simplicial complex $K$ is denoted by $|K|$ and is known as
the \emph{polyhedron} of $K$. A simplicial complex $K$ is a (simplicial)
\emph{triangulation} of any topological space homeomorphic to $|K|$.

By listing the vertices spanned by each face of a simplicial complex, we can
view simplicial complexes in a purely combinatorial way. An (abstract)
\emph{simplicial complex} $K$ on a (finite) vertex set $V = V(K)$ is a
collection of subsets $A \subseteq V(K)$ called \emph{faces} with the property
that if $A \in K$ and $B \subseteq A$, then $B \in K$. Any abstract simplicial
object has a geometric realization in Euclidean space, which is unique up to
homeomorphism. In the rest of this paper, simplicial complexes will be formally
treated as abstract simplicial complexes.

The \emph{dimension} of a face $A \in K$ is $|A|-1$, and the dimension of a
simplicial complex $K$ is the maximum dimension of a face of $K$. A \emph{facet}
of $K$ is an inclusion-wise maximal face. If $K$ and $L$ are simplicial
complexes on disjoint vertex sets, their \emph{join} is the simplicial complex
\[
K \star L = \{A \cup B : A \in K \text{ and } B \in L\}.
\]
The \emph{barycentric subdivision} of a simplicial complex $K$ is denoted by
$\sd(K)$.
For a given face $A \in K$, the \emph{link} $\lk_K(A)$ is defined by
\[
\lk_K(A) = \{B \in K: A \cap B = \emptyset \text{ and } A \cup B \in K\}.
\]
For a vertex $v$ in $K$, we abbreviate $\lk_K(\{v\})$ by $\lk_K(v)$. The
\emph{boundary} $\partial A$ of an $n$-simplex $A$ is the simplicial complex
consisting of all the faces of $A$ of dimension smaller than $n$. The (closed)
\emph{star} of $A$ in $K$, $\st_K(A)$, is the join $A\star \lk_K(A)$. The
operation $(A, a)$, called a \emph{stellar subdivision}, changes $K$ to $K'$ by
removing $\st_K(A)$ and replacing it with $a \star \partial A \star \lk_K(A)$,
where $a$ is a new vertex.

We will sometimes need to consider a slight generalisation of simplicial
complexes called \emph{regular $\Delta$-complexes}~\cite{Hat02} or
\emph{generalised simplicial complexes}~\cite{Koz08}. A regular $\Delta$-complex
can be constructed inductively, starting with a discrete point space $K^{(0)}$
in $\RR^n$, and at each step $i > 0$ we inductively construct the space
$K^{(i)}$ by attaching a set of $i$-simplices to $K^{(i-1)}$. Note that this
definition allows for several simplices on the same vertex set, which cannot
happen in a simplicial complex. It is however not hard to see that if $K$ is a
regular $\Delta$-complex, then $\sd(K)$ is a simplicial complex.

Two simplicial complexes $K$ and $L$ are \emph{combinatorially equivalent} (or
\emph{piecewise linearly homeomorphic}) if there are a subdivision $K'$ of $K$
and a subdivision $L'$ of $L$ such that $K'$ and $L'$ are isomorphic. A
\emph{combinatorial $n$-ball} is a simplicial complex that is combinatorially
equivalent to the $n$-simplex; a \emph{combinatorial $n$-sphere} is a simplicial
complex that is combinatorially equivalent to the boundary of an
$(n+1)$-simplex. A \emph{combinatorial $n$-manifold} (also known as a
\emph{piecewise linear $n$-manifold}) is a simplicial complex $M$ such that the
link of every vertex is a combinatorial $(n-1)$-sphere or a combinatorial
$(n-1)$-ball (the latter occurring only if the manifold has a non-empty
boundary). An $(n-1)$-face of a combinatorial $n$-manifold $M$ is a
\emph{boundary face} if it belongs to a single facet of $M$. The \emph{boundary
complex} of a combinatorial manifold $M$, denoted by $\partial M$, is the
subcomplex of $M$ consisting of all boundary facets of $M$ together with their
faces.

Suppose that $A$ is an $r$-simplex in a simplicial complex $K$ and that
$\lk_K(A)=\partial B$ for some $(n-r)$-simplex $B \notin K$. The \emph{bistellar
move} $\kappa(A, B)$ consists of changing $K$ by replacing $A \star \partial B$
with $\partial A \star B$. Two combinatorial manifolds are said to be
\emph{bistellar equivalent} if one can be obtained from the other via a sequence
of bistellar moves and simplicial isomorphisms. The following fundamental result
was proved by Pachner~\cite{Pac91} (see Lickorish~\cite{Lic99} for a very
readable exposition).

\begin{theorem}[Pachner]
\label{thm:pachner}
Two closed combinatorial manifolds are combinatorially equivalent if and only if
they are bistellar equivalent.
\end{theorem}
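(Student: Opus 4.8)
The plan is to prove the two implications separately; the ``if'' direction is routine, while the ``only if'' direction carries all the content.

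For the ``if'' direction I would check that a single bistellar move preserves the PL homeomorphism type. In the move $\kappa(A,B)$ the simplices $A$ and $B$ are complementary faces of the $(n+1)$-simplex $A\star B$, and $A\star\partial B$ and $\partial A\star B$ are two combinatorial $n$-balls sharing the combinatorial $(n-1)$-sphere $\partial A\star\partial B$ as their common boundary --- indeed they are the two pieces into which $\partial(A\star B)$ is cut. Since any two combinatorial balls of the same dimension with the same boundary triangulation are PL homeomorphic rel boundary, exchanging $A\star\partial B$ for $\partial A\star B$ inside $K$ yields a complex PL homeomorphic to $K$, and hence a closed combinatorial manifold again. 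As simplicial isomorphisms obviously preserve the PL class, bistellar equivalence implies combinatorial equivalence.

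For the converse I would bring in the stellar theory of PL topology. By a classical theorem going back to Alexander, two combinatorially equivalent simplicial complexes are related by a finite sequence of stellar subdivisions and inverse stellar subdivisions passing only through complexes in their common PL class; applied to combinatorially equivalent closed combinatorial $n$-manifolds $M$ and $N$, every intermediate complex is itself a closed combinatorial $n$-manifold, since a subdivision or inverse subdivision of a combinatorial manifold is a combinatorial manifold. As each bistellar move $\kappa(A,B)$ is reversed by $\kappa(B,A)$, bistellar equivalence is a symmetric relation, so it suffices to prove that a single stellar subdivision of a closed combinatorial manifold is bistellar equivalent to it. I would prove this by induction on the dimension $n$, the cases $n\le 1$ being immediate. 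Write the subdivision as $(A,a)$ and put $r=\dim A$, so that $\st_M(A)=A\star\lk_M(A)$ is replaced by $a\star\partial A\star\lk_M(A)$. If $r=n$, then $A$ is a facet, $\lk_M(A)=\partial\{a\}$ automatically, and the subdivision \emph{is} the bistellar move $\kappa(A,\{a\})$, which deletes the facet $A$ and inserts the cone $\{a\}\star\partial A$.

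Now suppose $r<n$. The link $L=\lk_M(A)$ is a combinatorial $(n-r-1)$-sphere, of dimension strictly below $n$; being a combinatorial sphere it is combinatorially equivalent to $\partial B$ for some $(n-r)$-simplex $B$, hence bistellar equivalent to $\partial B$ by the inductive hypothesis. The key mechanism is that a bistellar move $\kappa(C,D)$ carried out inside $L$ lifts to the bistellar move $\kappa(A\cup C,D)$ on $M$ --- valid because $\lk_M(A\cup C)=\lk_L(C)$ --- whose net effect is to perform the given move on $\lk_M(A)$. Thus we may first apply a sequence of such lifted moves to bring $M$ to a complex $\widetilde M$ with $\lk_{\widetilde M}(A)=\partial B$; one then checks that the single move $\kappa(A,B)$ followed by the stellar subdivision of $B$ realises the stellar subdivision of $A$ in $\widetilde M$; finally we undo the auxiliary moves. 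The main obstacle is that we are left with a stellar subdivision of $B$, a simplex whose dimension $n-r$ need not be smaller than $r$, so the recursion does not obviously terminate; closing the induction requires a delicate combinatorial argument, and this is precisely the heart of Pachner's proof (streamlined in Lickorish's exposition).
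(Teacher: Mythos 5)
The paper does not prove \Cref{thm:pachner} at all: it is imported as a black box from Pachner~\cite{Pac91}, with Lickorish~\cite{Lic99} cited for an exposition, so there is no in-paper argument to compare yours against. Judged on its own merits, your sketch correctly reproduces the skeleton of the standard proof. The ``if'' direction (the two balls $A\star\partial B$ and $\partial A\star B$ share the boundary $\partial A\star\partial B$, hence the swap preserves the PL type) is fine, the reduction of the converse to ``a single stellar subdivision of a closed combinatorial manifold is realisable by bistellar moves'' via the Alexander--Newman theorem is the right reduction, the lifting of a move $\kappa(C,D)$ in $\lk_M(A)$ to $\kappa(A\cup C,D)$ in $M$ is correct, and so is the identity that $\kappa(A,B)$ followed by the starring of $B$ at $a$ produces $a\star\partial A\star\partial B\star\lk(A\star B)$, i.e.\ the starring of $A$.

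But the argument stops exactly where the theorem begins. You trade the starring of the $r$-simplex $A$ for the starring of the $(n-r)$-simplex $B$ plus some bistellar moves, observe that no complexity measure visibly decreases, and defer the termination argument to ``the heart of Pachner's proof.'' That deferred step is not a technicality; it is the entire content of the theorem, and in Lickorish's exposition it occupies most of the work (a double induction together with a reduction to starrings at edges). There is also a secondary flaw in the step ``finally we undo the auxiliary moves'': the inverse moves $\kappa(D,A\cup C)$ require $A\cup C$ to be absent and $A$ to be present with the appropriate link, but after $\kappa(A,B)$ and the starring of $B$ the face $A$ no longer exists, so the undoing must instead be performed as moves in the link of the new vertex $a$ and re-justified there. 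As written, the proposal is an accurate road map to the literature proof rather than a proof.
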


Casali~\cite{Cas95} extended \Cref{thm:pachner} to combinatorial manifolds with
a boundary. Note that in the theorem below, the boundary is unaffected by
bistellar moves, which act only on the facets.

\begin{theorem}[Casali]
\label{thm:casali}
Two combinatorial manifolds with isomorphic boundaries are combinatorially
equivalent if and only if they are bistellar equivalent.
\end{theorem}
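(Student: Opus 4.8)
The plan is to deduce \Cref{thm:casali} from \Cref{thm:pachner} by carrying out Pachner's argument relative to the boundary. The ``if'' direction is routine: a bistellar move acts only on facets, hence fixes the boundary complex, and since it replaces $A\star\partial B$ by $\partial A\star B$ inside a single $n$-simplex it preserves PL-homeomorphism type; so bistellar equivalent manifolds have the same boundary complex and are combinatorially equivalent.

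For the ``only if'' direction, let $M_1$ and $M_2$ be combinatorially equivalent with $\partial M_1\cong\partial M_2$. First I would relabel vertices so that $M_1$ and $M_2$ share a common boundary complex $P$, and then use a collar of the boundary to replace the combinatorial equivalence $|M_1|\to|M_2|$ by one restricting to the identity on $|P|$; thus $M_1$ and $M_2$ become combinatorially equivalent rel~$P$. The classical fact that combinatorially equivalent complexes are related by stellar subdivisions and their inverses (Alexander, Newman), applied relative to $P$, then yields a finite sequence of such moves, each performed at a face \emph{not} contained in $P$, transforming $M_1$ into $M_2$. A stellar subdivision at such an interior face $A$ alters only the closed star of $A$ and leaves $P$ intact; moreover, since the link of an interior face of a combinatorial manifold is a combinatorial sphere, every newly created $(n-1)$-face lies in two facets. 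Hence each intermediate complex is again a combinatorial manifold with boundary exactly $P$.

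It then remains to rewrite each stellar subdivision (or inverse stellar subdivision) occurring in this sequence as a composition of bistellar moves --- the technical heart of Pachner's theorem, worked out in Lickorish~\cite{Lic99}. A stellar subdivision at $A$, and likewise its inverse, can be simulated by finitely many bistellar moves supported near the closed star of $A$; since $A$ is interior, every face created or destroyed along the way either contains the starring vertex or lies in the closed star of $A$, so none of them belongs to the boundary, and the simulation never disturbs $P$. Concatenating these simulations over the whole stellar sequence produces a sequence of bistellar moves --- each acting on facets only, all fixing $P$ --- carrying $M_1$ to a simplicial copy of $M_2$, which is what \Cref{thm:casali} asserts.

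Two points require genuine care. The first is making the combinatorial equivalence the identity on the boundary: this is where the collar structure of $\partial M$ is used, and it is also why the naive alternative --- cap off (or double) the boundary and apply \Cref{thm:pachner} to the resulting closed manifolds --- fails, since the bistellar moves it produces need not respect the decomposition into the original manifold and the cap. The second, and the main obstacle, is verifying the \emph{locality} of Pachner's simulation of a stellar operation: one must check that starring an interior face never forces a bistellar move on a boundary face, even though the closed star of an interior face may abut $P$.
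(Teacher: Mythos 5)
First, a point of order: the paper does not prove \Cref{thm:casali} at all --- it is imported from Casali~\cite{Cas95} as a black box --- so there is no in-paper argument to compare yours against; you are attempting to reprove a cited result. Your ``if'' direction is correct (and it is worth making explicit \emph{why} an interior bistellar move cannot touch the boundary complex: if some face of $A \star \partial B$ were a boundary $(n-1)$-face, then $A$ itself would lie in $\partial M$ and $\lk_M(A)$ would be a ball, not $\partial B$).

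The ``only if'' direction, however, has two genuine gaps, both of which you flag as ``requiring care'' but neither of which you close. (1) The reduction to a combinatorial equivalence rel~$P$ does not follow from the collar trick alone. Writing $h$ for the given PL homeomorphism $|M_1|\to|M_2|$ and $\phi$ for the given simplicial isomorphism of boundaries, producing an equivalence that is the identity on $|P|$ amounts to extending the boundary self-homeomorphism $(h|_{\partial M_1})^{-1}\circ\phi$ of $\partial M_1$ over all of $M_1$, or at least isotoping it to one that extends; a PL self-homeomorphism of the boundary of a manifold need not extend over the manifold, and the hypotheses hand you \emph{some} PL homeomorphism and \emph{some} boundary isomorphism with no compatibility between them. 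So this step is not merely technical --- as written it can fail, and whatever argument rescues it is doing real work. (2) The assertion that the bistellar simulation of an interior stellar subdivision ``never disturbs $P$'' is precisely the technical content of \cite{Cas95}. In Lickorish's exposition~\cite{Lic99} the simulation is constructed by an induction over links and shellable subdivisions of the star, and when $\st(A)$ abuts the boundary one must verify that every intermediate move is performed at a face whose link really is the boundary of a simplex; observing that the faces involved ``lie in the closed star of $A$'' does not establish this, since the closed star of an interior face can contain boundary faces. As it stands your text is a plausible road map to Casali's theorem, not a proof of it.
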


Since Fan's lemma deals with centrally symmetric triangulations of spheres, we
need to adapt combinatorial and bistellar equivalence to the symmetric setting.
A \emph{simplicial $\ZZ_2$-complex} is a simplicial complex $K$ together with a
simplicial action $\nu:V(K) \to V(K)$. For the sake of simplicity, we write
$\nu(A) = -A$ and refer to $A$ and $-A$ as \emph{antipodal} faces of $K$. We
assume throughout the paper that the simplicial action is \emph{free}, i.e., $A
\cap (-A) = \emptyset$ for every simplex $A \in K$. 

Two simplicial $\ZZ_2$-complexes $K$ and $L$ are \emph{$\ZZ_2$-isomorphic} if
there is an isomorphism $f:K \to L$ such that $f(-A)=-f(A)$ for every face $A
\in K$. Two simplicial $\ZZ_2$-complexes $K$ and $L$ are \emph{combinatorially
$\ZZ_2$-equivalent} if there are a subdivision $K'$ of $K$ and a subdivision
$L'$ of $L$ such that $K'$ and $L'$ are $\ZZ_2$-isomorphic simplicial
$\ZZ_2$-complexes. A (closed) \emph{combinatorial $\ZZ_2$-manifold} is
simplicial $\ZZ_2$-complex such that the link of every vertex is a combinatorial
sphere.

Suppose that $A$ is an $r$-simplex in a simplicial $\ZZ_2$-complex $K$, and that
$\lk_K(A)=\partial B$ for some $(n-r)$-simplex $B \notin K$. The pair of
bistellar moves $\kappa(A, B), \kappa(-A, -B)$ is called a
\emph{$\ZZ_2$-bistellar move}, and we denote it by $\tilde \kappa(A, B)$. Two
simplicial $\ZZ_2$-complexes related by a finite sequence of $\ZZ_2$-bistellar
moves and simplicial isomorphisms are said to be \emph{$\ZZ_2$-bistellar
equivalent}.

In the remainder of this section we shall prove the following $\ZZ_2$-analogue
of Pachner's theorem.

\begin{theorem}
\label{thm:z2-pachner}
Two combinatorial $\ZZ_2$-manifolds are combinatorially $\ZZ_2$-equiv\-a\-lent
if and only if they are $\ZZ_2$-bistellar equivalent.
\end{theorem}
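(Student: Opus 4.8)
The plan is to lift the problem to the quotient and invoke Casali's theorem (\Cref{thm:casali}), since the quotient of a combinatorial $\ZZ_2$-manifold has a canonical structure as a combinatorial manifold with boundary once we pass to the barycentric subdivision. First I would observe that the nontrivial direction is the ``only if'': if $K$ and $L$ are $\ZZ_2$-bistellar equivalent then each $\ZZ_2$-bistellar move is in particular a subdivision followed by an inverse subdivision respecting the action, so combinatorial $\ZZ_2$-equivalence follows immediately. For the converse, suppose $K$ and $L$ are combinatorially $\ZZ_2$-equivalent. Replacing $K$ and $L$ by $\sd(K)$ and $\sd(L)$ (a $\ZZ_2$-subdivision, and harmless since one can check a single barycentric subdivision is realised by $\ZZ_2$-bistellar moves, or simply absorb it into the equivalence), we may assume the free action has no ``short'' identifications, so that the quotient complexes $K/\ZZ_2$ and $L/\ZZ_2$ are honest simplicial complexes; here it is convenient to invoke the remark in the excerpt that $\sd$ of a regular $\Delta$-complex is simplicial.

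The key step is then a dictionary between the symmetric objects upstairs and ordinary objects downstairs. I would prove: (i) if $M$ is a combinatorial $\ZZ_2$-manifold of dimension $n$, then $M/\ZZ_2$ is a combinatorial $n$-manifold (the link of a vertex downstairs is the quotient or a ``folded'' copy of the corresponding link upstairs, which one checks is a combinatorial ball or sphere using that the upstairs links are combinatorial spheres); (ii) combinatorial $\ZZ_2$-equivalence of $K$ and $L$ implies combinatorial equivalence of $K/\ZZ_2$ and $L/\ZZ_2$, because a $\ZZ_2$-isomorphic common subdivision descends to an isomorphic common subdivision of the quotients; (iii) a bistellar move on $M/\ZZ_2$ lifts to a $\ZZ_2$-bistellar move on $M$ — given $\kappa(\bar A,\bar B)$ downstairs, the preimage of $\bar A$ is a pair of antipodal simplices $A,-A$, and since the covering is a local isomorphism the condition $\lk(\bar A)=\partial\bar B$ lifts to $\lk_M(A)=\partial B$ and $\lk_M(-A)=\partial(-B)$, so we may perform $\tilde\kappa(A,B)$, and this is compatible provided $A$ and $-A$ (hence the moves) do not interfere, which is guaranteed by freeness after the subdivision. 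Applying Casali's theorem to the quotients $K/\ZZ_2$ and $L/\ZZ_2$ (which have isomorphic boundary, namely empty, since $K,L$ are closed — or, if one prefers to work with boundaries explicitly, one truncates to a fundamental domain and the boundaries match under the induced map) yields a sequence of bistellar moves and isomorphisms connecting them; lifting this sequence via (iii) gives the desired sequence of $\ZZ_2$-bistellar moves and $\ZZ_2$-isomorphisms connecting $K$ and $L$.

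The main obstacle I anticipate is step (iii), the lifting of a single bistellar move: one must ensure that performing $\kappa(A,B)$ and $\kappa(-A,-B)$ simultaneously is well-defined, i.e., that $A\star\partial B$ and $-A\star\partial B$ (equivalently the stars being modified) are disjoint in $M$. This is where freeness of the action is used in an essential way, but it may fail for a coarse triangulation — which is precisely why the barycentric subdivision at the outset is needed, to separate a simplex from its antipode by enough combinatorial distance. A secondary technical point is checking that the quotient of a combinatorial $\ZZ_2$-manifold is genuinely a combinatorial manifold and not merely a homology manifold; this again reduces, via the link characterisation, to the fact that a free simplicial $\ZZ_2$-action on a combinatorial sphere has quotient a combinatorial sphere or ball, which can be handled by the same subdivision-plus-Casali machinery one dimension down (or cited as folklore). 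Once these points are in place, the argument is a formal transfer across the covering map.
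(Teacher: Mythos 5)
Your plan follows essentially the same route as the paper: barycentrically subdivide, pass to the quotient (a combinatorial manifold by freeness of the action), apply Pachner's theorem there (your use of \Cref{thm:casali} with empty boundary amounts to the same thing, since the manifolds here are closed), and lift the resulting bistellar moves to $\ZZ_2$-bistellar moves using the fact that the preimage of each star is a disjoint pair of stars. Two caveats. First, your fallback of ``simply absorbing'' the initial barycentric subdivision into the equivalence does not work: the desired conclusion is that $K$ and $L$ themselves are $\ZZ_2$-bistellar equivalent, so after connecting $\sd(K)$ to $\sd(L)$ you still need $K$ and $\sd(K)$ to be $\ZZ_2$-bistellar equivalent. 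That is your first alternative, which you assert as checkable but which is where the paper does real work: it decomposes $\sd$ into antipodal pairs of stellar subdivisions and realises each pair by $\ZZ_2$-bistellar moves, using \Cref{thm:casali} (for manifolds \emph{with} boundary) applied to the stars being subdivided. Second, in your step (i) the link of a vertex downstairs is not a ``folded'' copy of the link upstairs but is isomorphic to it: freeness guarantees that no link contains an antipodal pair, so after one barycentric subdivision the covering map is injective on closed stars. A genuinely folded link would be the quotient of a sphere by a free involution, i.e.\ a projective space, which would contradict the quotient being a combinatorial manifold rather than establish it.
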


We can reduce the theorem to Pachner's theorem via the following three lemmas.

\begin{lemma}
\label{lem:quotient}
If $M$ is a combinatorial $\ZZ_2$-manifold, then $\sd(M)/\ZZ_2$ is a
combinatorial manifold.
\end{lemma}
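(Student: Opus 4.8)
The plan is to verify the defining property of a combinatorial manifold directly at each vertex of $N := \sd(M)/\ZZ_2$. First I would observe that because the $\ZZ_2$-action on $M$ is free and simplicial, the barycentric subdivision $\sd(M)$ carries an induced free simplicial action; moreover, since $\sd(M)$ has no simplex fixed (even setwise) by the action — any simplex with a barycenter fixed would force an antipodal pair of faces of $M$ to share a vertex — the quotient map $q : \sd(M) \to N$ is a simplicial covering map, and $N$ is genuinely a simplicial complex (this is exactly why we pass to the barycentric subdivision; cf.\ the remark in the excerpt that $\sd(K)$ is always a simplicial complex). In particular every vertex $\bar v$ of $N$ has a neighbourhood in $N$ isomorphic to the corresponding neighbourhood in $\sd(M)$; precisely, $q$ restricts to an isomorphism $\st_{\sd(M)}(v) \to \st_N(\bar v)$ for each lift $v$ of $\bar v$, and hence to an isomorphism $\lk_{\sd(M)}(v) \to \lk_N(\bar v)$.

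Next I would reduce the statement to a purely local fact about barycentric subdivisions. By the previous paragraph it suffices to show that $\lk_{\sd(M)}(v)$ is a combinatorial $(n-1)$-sphere for every vertex $v$ of $\sd(M)$. A vertex of $\sd(M)$ is the barycenter $\hat A$ of some face $A \in M$, say with $\dim A = k$. There is a standard description of the link of $\hat A$ in $\sd(M)$ as a join: $\lk_{\sd(M)}(\hat A) \cong \sd(\partial A) \star \sd(\lk_M(A))$, where $\sd(\partial A)$ accounts for the faces of $\sd(M)$ below $\hat A$ in the barycentric order and $\sd(\lk_M(A))$ for those above it. Now $\partial A$ is the boundary of a $k$-simplex, so $\sd(\partial A)$ is a combinatorial $(k-1)$-sphere; and since $M$ is a combinatorial $\ZZ_2$-manifold (in particular a combinatorial $n$-manifold without boundary), $\lk_M(A)$ is a combinatorial $(n-k-1)$-sphere, hence so is $\sd(\lk_M(A))$. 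The join of a combinatorial $p$-sphere and a combinatorial $q$-sphere is a combinatorial $(p+q+1)$-sphere, so $\lk_{\sd(M)}(\hat A)$ is a combinatorial $((k-1)+(n-k-1)+1)$-sphere $=$ combinatorial $(n-1)$-sphere, as required. (For $k=0$ one uses the convention that $\sd(\partial A) = \sd(\emptyset)$ is the $(-1)$-sphere and the join is just $\sd(\lk_M(v))$.)

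The main obstacle, and the step that deserves the most care, is the first one: checking that $q$ is a covering of simplicial complexes so that links upstairs and downstairs agree. Here one must use freeness in the strong form given in the paper — $A \cap (-A) = \emptyset$ for \emph{every} simplex $A \in M$ — to guarantee that no simplex of $\sd(M)$ is identified with a distinct simplex sharing a vertex with it, i.e.\ that the action on $\sd(M)$ is free and, crucially, that the star of any vertex of $\sd(M)$ embeds into $N$. Indeed, if two faces of $\st_{\sd(M)}(v)$ had the same image in $N$, their vertex sets (chains of faces of $M$) would be related by the antipodal map, forcing some face of $M$ and its antipode to lie in a common chain, hence to be comparable and nonempty-intersecting — contradicting freeness. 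Once this is nailed down, the join computation for links is routine and the proof concludes.
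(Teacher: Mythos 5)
Your proof is correct and follows essentially the same route as the paper: freeness of the $\ZZ_2$-action guarantees both that $\sd(M)/\ZZ_2$ is a genuine simplicial complex and that stars (hence links) in $\sd(M)$ descend isomorphically to the quotient, so the links downstairs are combinatorial spheres because $\sd(M)$ is a combinatorial manifold. The only difference is that you additionally justify the fact that $\sd(M)$ is itself a combinatorial manifold via the join formula $\lk_{\sd(M)}(\hat A) \cong \sd(\partial A)\star\sd(\lk_M(A))$, a standard fact which the paper simply asserts.
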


\begin{proof}
Since the $\ZZ_2$-action on $M$ is free, no face of $M$ contains a pair of
antipodal vertices. Hence, $M/\ZZ_2$ is a regular $\Delta$-complex, and
therefore, $\sd(M)/\ZZ_2$ is a simplicial complex.

Since $M$ is a combinatorial manifold, so is $\sd(M)$. Let $u$ be a vertex of
$\sd(M)$, and let $[u]$ be the image of $u$ in $\sd(M)/\ZZ_2$. If
$\lk_{\sd(M)}(u)$ contains two antipodal vertices $v,-v$, then $\{v,-v\}$ is a
face of $M$, contradicting the assumption that the $\ZZ_2$-action on $M$ is
free. Hence, $\lk_{\sd(M)/\ZZ_2}([u]) \cong \lk_{\sd(M)}(u)$, which is a
combinatorial sphere, because $\sd(M)$ is a combinatorial manifold.
\end{proof}

\begin{lemma}
\label{lem:z2-stellar}
If $M$ and $M'$ are combinatorial $\ZZ_2$-manifolds, and $M'$ is obtained from
$M$ by an antipodal pair $(A,a), (-A,-a)$ of stellar moves, then $M$ and $M'$
are $\ZZ_2$-bistellar equivalent.
\end{lemma}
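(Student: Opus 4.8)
The plan is to reduce to the non-equivariant setting: a single stellar subdivision is, away from the boundary of the star it modifies, a finite sequence of bistellar moves, and the two antipodal stellar moves act on disjoint portions of $M$, so their bistellar realisations can be run in parallel. First I would fix notation and record the geometric picture. Put $L = \lk_M(A)$; since $M$ is a combinatorial $\ZZ_2$-manifold, $L$ is a combinatorial $(n-1-\dim A)$-sphere, so the closed star $\st_M(A) = A \star L$ is a combinatorial $n$-ball with boundary $\partial(A\star L) = \partial A \star L$. The stellar move $(A,a)$ removes the open star of $A$ (the faces containing $A$) and glues in $a \star \partial A \star L$, which is again a combinatorial $n$-ball with the same boundary $\partial A\star L$. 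Because no face of $M$ can contain two antipodal vertices, no face contains both $A$ and $-A$; hence the open stars of $A$ and of $-A$ are disjoint, the moves $(A,a)$ and $(-A,-a)$ modify disjoint sets of faces (and introduce distinct new vertices $a \neq -a$), and $M'$ is their common result. One checks similarly that $M^{\ast} := M \setminus (\mathring{\st}_M(A) \cup \mathring{\st}_M(-A))$ is a $\ZZ_2$-subcomplex meeting $A\star L$ exactly in $\partial A\star L$ and meeting $(-A)\star(-L)$ exactly in the antipode of $\partial A\star L$, so that $M = M^{\ast}\cup (A\star L)\cup((-A)\star(-L))$ with every gluing taking place inside $M^{\ast}$.

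The key input is \Cref{thm:casali}. Applied to the two combinatorial $n$-balls $A\star L$ and $a\star\partial A\star L$, which have the same boundary, it gives a sequence of bistellar moves $\kappa(C_1,D_1),\dots,\kappa(C_k,D_k)$ transforming one into the other; as noted after the statement of \Cref{thm:casali}, such moves leave the boundary $\partial A\star L$ untouched. Write $N_0 = A\star L,\ N_1,\dots,\ N_k = a\star\partial A\star L$ for the intermediate balls, so each $N_i$ has boundary $\partial A\star L$. Since the move $\kappa(C_i,D_i)$ is admissible on $N_{i-1}$ with $\lk_{N_{i-1}}(C_i)=\partial D_i$ a combinatorial sphere, and a boundary face of a combinatorial ball has a ball (not a sphere) as its link, the centre $C_i$ is never a boundary face of $N_{i-1}$; this interiority is what makes the move ``local''.

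Now I would lift the whole sequence to the equivariant setting. Set $\tilde M_i := M^{\ast}\cup N_i \cup (-N_i)$; then $\tilde M_0 = M$ and $\tilde M_k = M'$. I claim $\tilde\kappa(C_i,D_i)$ is a legitimate $\ZZ_2$-bistellar move carrying $\tilde M_{i-1}$ to $\tilde M_i$. Indeed, because $C_i$ lies in the interior of $N_{i-1}$ it lies neither in $M^{\ast}$ nor in $-N_{i-1}$ (these meet $N_{i-1}$ only along $\partial N_{i-1}$), so every face of $\tilde M_{i-1}$ containing $C_i$ already lies in $N_{i-1}$; hence $\lk_{\tilde M_{i-1}}(C_i)=\lk_{N_{i-1}}(C_i)=\partial D_i$ and $D_i\notin\tilde M_{i-1}$, so $\kappa(C_i,D_i)$ is admissible on $\tilde M_{i-1}$ and produces $M^{\ast}\cup N_i\cup(-N_{i-1})$. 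Applying $\nu$ and using that $M^{\ast}$ and the construction are $\ZZ_2$-symmetric, the antipodal move $\kappa(-C_i,-D_i)$ is also admissible; as the two moves act inside the interiors of $N_{i-1}$ and $-N_{i-1}$, which share only boundary faces that neither move touches, they are independent and together give $\tilde M_i$. Thus $\tilde\kappa(C_1,D_1),\dots,\tilde\kappa(C_k,D_k)$ is a sequence of $\ZZ_2$-bistellar moves from $M$ to $M'$. (At each stage $\tilde M_i$ is again a combinatorial $\ZZ_2$-manifold: bistellar moves preserve the PL type, and the new simplices $D_i$ and $-D_i$ sit in disjoint regions, so the action stays free.)

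The main obstacle is precisely the bookkeeping in this lifting step: one must verify that a bistellar move performed inside the ball $N_{i-1}$ does not ``see'' the rest of $M$, and that the two antipodal copies never collide. This is exactly what the interiority of the centres $C_i$ (forced by the sphere-link condition) and the disjointness of the open stars of $A$ and $-A$ provide; beyond \Cref{thm:casali}, no new idea is needed, only a careful unwinding of the locality of bistellar moves.
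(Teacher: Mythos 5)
Your proposal is correct and follows essentially the same route as the paper: both apply \Cref{thm:casali} to the pair of balls $\st_M(A)$ and $a\star\partial A\star\lk_M(A)$ with common boundary, and then use the disjointness of the open stars of $A$ and $-A$ (forced by freeness of the action) to pair each bistellar move with its antipode into a $\ZZ_2$-bistellar move. Your write-up merely makes explicit the locality bookkeeping that the paper's proof leaves implicit.
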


\begin{proof}
Observe that $\st_M(A)$ and $a \star \partial A \star \lk_M(A)$ are
combinatorial manifolds with isomorphic boundaries. By \Cref{thm:casali},
$\st_M(A)$ and $a \star \partial A \star \lk_M(A)$ are linked by a finite
sequence of bistellar moves, such that the boundaries of the intermediate
complexes are isomorphic. Therefore, the stellar move $(A,a)$ can be emulated by
a finite sequence $\kappa(A_1,B_1), \ldots, \kappa(A_t,B_t)$ of bistellar moves
such that $M\setminus \st_M(A)$ is unchanged.

Since the $\ZZ_2$-action on $M$ is free, the interiors of $\st_M(A)$ and
$\st_M(-A)$ are disjoint. Therefore, the pair $(A,a),(-A,-a)$ of stellar moves
can be emulated by a finite sequence $\tilde \kappa(A_1,B_1), \ldots, \tilde
\kappa(A_t,B_t)$ of $\ZZ_2$-bistellar moves.
\end{proof}

\begin{lemma}
\label{lem:barycentric}
If $M$ is a combinatorial $\ZZ_2$-manifold, then $M$ and $\sd(M)$ are
$\ZZ_2$-bistellar equivalent.
\end{lemma}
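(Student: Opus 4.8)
The plan is to realise $\sd(M)$ as the outcome of a long sequence of stellar subdivisions applied to $M$, arranged into antipodal pairs, and then to invoke \Cref{lem:z2-stellar} once for each pair. We use the standard fact from PL topology that, for any simplicial complex $K$, the barycentric subdivision $\sd(K)$ is obtained by performing the stellar subdivision $(A,\hat A)$ at every face $A$ of $K$ of dimension at least $1$, processing the faces in order of decreasing dimension, where $\hat A$ denotes the barycenter of $A$ (a stellar subdivision at a vertex is trivial, so we may stop at dimension $1$). A routine induction on dimension shows that every such subdivision is legitimate: subdividing at faces of dimension strictly larger than $\dim A$ leaves every face of $K$ of dimension at most $\dim A$ in the current complex, so $A$ is still available to be starred.

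Since the $\ZZ_2$-action on $M$ is free, every face $A$ with $\dim A \ge 1$ satisfies $A \cap (-A) = \emptyset$, so $A \ne -A$ and $\dim A = \dim(-A)$. We may therefore list the faces of $M$ of dimension at least $1$ so that each antipodal pair $A, -A$ appears consecutively and so that faces are still taken in order of decreasing dimension. This converts the sequence of stellar subdivisions above into a sequence of antipodal pairs $(A,\hat A),(-A,-\hat A)$ of stellar subdivisions, the barycenter of $-A$ being $-\hat A$. Let $M = M_0, M_1, \ldots, M_N = \sd(M)$ be the complexes obtained after each successive pair. Each $M_i$ is a subdivision of $M$, hence a combinatorial manifold, and carries a simplicial $\ZZ_2$-action extending that of $M$; this action is still free, because any face of $M_i$ that contains the new vertex $\hat A$ lies in $\hat A \star \partial A \star \lk_{M_{i-1}}(A)$ and hence has all its other vertices in $V(A) \cup V(\lk_{M_{i-1}}(A)) \subseteq V(M_{i-1})$, which does not contain the brand-new vertex $-\hat A$; so no face of $M_i$ contains both $\hat A$ and $-\hat A$. (That the two stars being subdivided do not interfere follows, as in the proof of \Cref{lem:z2-stellar}, from the fact that $\st_{M_{i-1}}(A)$ and $\st_{M_{i-1}}(-A)$ have disjoint interiors.) Thus each $M_i$ is a combinatorial $\ZZ_2$-manifold obtained from $M_{i-1}$ by an antipodal pair of stellar subdivisions, so by \Cref{lem:z2-stellar} the complexes $M_{i-1}$ and $M_i$ are $\ZZ_2$-bistellar equivalent. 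Concatenating these equivalences shows that $M$ and $\sd(M)$ are $\ZZ_2$-bistellar equivalent.

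The steps requiring care are all of a bookkeeping nature: verifying that the decreasing-dimension schedule of stellar subdivisions really produces $\sd(M)$ (standard), that every intermediate complex is still a combinatorial manifold (immediate, being a subdivision of one), and above all that the $\ZZ_2$-action remains \emph{free} throughout when the barycenters are adjoined in antipodal pairs. I expect this last point to be the only genuine subtlety, and it is handled by the observation that a barycenter $\hat A$ and its antipode $-\hat A$ can never lie in a common face.
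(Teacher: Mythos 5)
Your proposal is correct and follows essentially the same route as the paper: realise $\sd(M)$ by stellar subdivisions of faces in non-increasing dimension, reorder so that antipodal faces are subdivided consecutively, and apply \Cref{lem:z2-stellar} to each antipodal pair. The only difference is that you spell out the verification that each intermediate complex is still a free combinatorial $\ZZ_2$-manifold, a point the paper leaves implicit.
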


\begin{proof}
It is a well-known fact (see e.g.~\cite[Proposition~2.23]{Koz08}) that $\sd(M)$
can be obtained from $M$ by a sequence of stellar subdivisions of simplices of
non-increasing dimension; in particular, the order of stellar subdivisions in
each dimension can be chosen arbitrarily. Therefore, we can assume that
antipodal simplices are subdivided consecutively. By \Cref{lem:z2-stellar}, each
such antipodal pair of stellar subdivisions can be replaced by a finite sequence
of $\ZZ_2$-bistellar moves. Therefore, $M$ and $\sd(M)$ are $\ZZ_2$-bistellar
equivalent.
\end{proof}

We can now proceed to prove the aforementioned $\ZZ_2$-analogue of Pachner's
theorem.

\begin{proof}[Proof of \Cref{thm:z2-pachner}]
It is easy to see that $\ZZ_2$-bistellar equivalent combinatorial
$\ZZ_2$-manifolds must be combinatorially $\ZZ_2$-equivalent.

Conversely, consider any combinatorially $\ZZ_2$-equivalent combinatorial
$\ZZ_2$-manifolds $M$ and $N$. Then $\sd(M)$ and $\sd(N)$ are combinatorially
$\ZZ_2$-equivalent combinatorial $\ZZ_2$-manifolds; let $M'$ and $N'$ be
subdivisions of $\sd(M)$ and $\sd(N)$, respectively, such that $M'$ and $N'$ are
$\ZZ_2$-isomorphic $\ZZ_2$-complexes. Then $M'/\ZZ_2$ and $N'/\ZZ_2$ are
isomorphic subdivisions of $\sd(M)/\ZZ_2$ and $\sd(N)/\ZZ_2$, respectively, so
$\sd(M)/\ZZ_2$ and $\sd(N)/\ZZ_2$ are combinatorially equivalent. By
\Cref{lem:quotient}, $\sd(M)/\ZZ_2$ and $\sd(N)/\ZZ_2$ are combinatorial
manifolds, so by \Cref{thm:pachner}, $\sd(M)/\ZZ_2$ and $\sd(N)/\ZZ_2$ are
bistellar equivalent, which means that $\sd(M)$ and $\sd(N)$ are
$\ZZ_2$-bistellar equivalent, and by \Cref{lem:barycentric}, we conclude that
$M$ and $N$ are $\ZZ_2$-bistellar equivalent.
\end{proof}

\section{Proof of Fan's lemma}
\label{sec:proof}

Given any combinatorial manifold $M$ (with or without boundary), and any
labelling $\lambda : V(M) \to \ZZ\setminus \{0\}$, we shall denote the number of
positive and negative alternating facets of $M$ by $\alpha^+_{\lambda}(M)$ and
$\alpha^-_{\lambda}(M)$, respectively.

Equipped with \Cref{thm:z2-pachner}, proving Fan's lemma (for an appropriate
class of centrally symmetric triangulations of the sphere) is reduced to proving
the following simple lemma.

\begin{lemma}
\label{lem:relabel}
Let $M$ and $M'$ be combinatorial $\ZZ_2$-manifolds, where $M'$ is obtained from
$M$ by a $\ZZ_2$-bistellar move $\tilde \kappa(A,B)$. For any Fan labelling
$\lambda$ of $M$, there exists a Fan labelling $\lambda'$ of $M'$ such that
\[
\alpha^+_{\lambda}(M) \equiv \alpha^+_{\lambda'}(M') \pmod 2.
\]
\end{lemma}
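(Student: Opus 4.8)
The plan is to track how the set of alternating facets changes under a single $\ZZ_2$-bistellar move $\tilde\kappa(A,B)$, which by definition is a pair of ordinary bistellar moves $\kappa(A,B)$ and $\kappa(-A,-B)$ acting on disjoint portions of the complex. The move $\kappa(A,B)$ replaces $A\star\partial B$ by $\partial A\star B$ inside the full subcomplex $A\star B$ (an $(n+1)$-ball with boundary $\partial(A\star B)=\partial A\star B\ \cup\ A\star\partial B$). So the only facets that change are the $n$-faces of this ball, and everything outside $\st_M(A)\cup\st_M(-A)$ is untouched. Since the two halves of the $\ZZ_2$-move are antipodal and a labelling is required to satisfy $\lambda'(-v)=-\lambda'(v)$, it suffices to understand one half: if on the $\kappa(A,B)$ side the parity of positive alternating facets changes by some amount, the $\kappa(-A,-B)$ side contributes the antipodal picture, and I will have to check that the net effect on $\alpha^+_{\lambda'}(M')$ is still controlled mod $2$ (here the interplay between positive and negative alternating simplices under the antipodal map is the bookkeeping to get right).

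The core is a local lemma: given the labels already fixed on $\partial(A\star B)$ by $\lambda$ (these vertices survive the move), I want to choose a label for the single new vertex — call it the apex of $B$ — so that the number of positive alternating facets among the new facets $\partial A\star B$ has the same parity as the number among the old facets $A\star\partial B$. The natural tool is a parity/degree argument internal to the ball $A\star B$: consider the "almost alternating" $(n-1)$-faces (those that would be alternating except for a repeated label or a single extra label) and count incidences. Every $n$-ball has the property that a suitable signed count of alternating facets is determined by the boundary labelling; concretely, one can run the standard Fan-type pairing argument (à la the proof that $\alpha^+ - \alpha^-$ on a sphere is computed from any hemisphere) on the two triangulations $A\star\partial B$ and $\partial A\star B$ of the same ball and conclude that the two facet-counts agree mod $2$ once the new vertex is labelled compatibly. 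The freedom in labelling the new vertex is exactly what lets us avoid creating a bad edge (one with $\lambda(u)+\lambda(v)=0$): I expect that assigning the new vertex a label of very large absolute value, with an appropriate sign, works — a large label is never the minimum-absolute-value label of a simplex, so it only ever appears as the outermost vertex of an alternating simplex, and its sign can be forced to make the relevant counts match.

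Concretely the steps are: (i) recall that $M$ and $M'$ agree outside $X:=\st_M(A)\cup\st_M(-A)$, and $A\star B$, $(-A)\star(-B)$ are disjoint balls; (ii) prove the local parity lemma for a single bistellar move on a ball — the number of positive alternating facets of $A\star\partial B$ and of $\partial A\star B$ are congruent mod $2$, provided the new vertex gets a label of large absolute value and a correctly chosen sign, and no forbidden edge is created; (iii) apply this to both halves of $\tilde\kappa(A,B)$, using $\lambda'(-v)=-\lambda'(v)$ on the new antipodal pair of vertices so that the two local contributions are antipodal copies of each other; (iv) assemble: $\alpha^+_{\lambda'}(M')-\alpha^+_{\lambda}(M)$ equals the sum of the (local) changes over the two balls, which is even; and (v) verify $\lambda'$ is a genuine Fan labelling of $M'$, i.e. antipodal on vertices and never summing to zero on an edge — the large-label choice handles the new edges.

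The main obstacle I anticipate is step (ii): getting the sign of the new vertex exactly right so that the parity is preserved rather than flipped. The difficulty is that a bistellar move can change the number of facets of the ball (from $\dim A + 1$ facets in $A\star\partial B$ to $\dim B + 1$ in $\partial A\star B$), so a crude count is not parity-invariant; one really needs the structural fact that alternating facets come in controlled families. I would handle this by setting up the now-standard "walk" argument — pairing each almost-alternating $(n-1)$-face with the $n$-faces containing it — restricted to the ball $A\star B$, and observing that because $A\star B$ is a ball the only unpaired objects live on its boundary $\partial(A\star B)$, which is the same for $M$ and $M'$; this forces the two alternating-facet counts to have equal parity once the new vertex's sign is chosen to match the parity bookkeeping on the boundary. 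A secondary, easier obstacle is making sure the large new label does not itself destroy an alternation (it cannot lie between two old labels in absolute value if it is the largest), which is straightforward.
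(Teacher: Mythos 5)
Your framework (the move is local to the two antipodal balls, so it suffices to compare alternating facets of $A\star\partial B$ with those of $\partial A\star B$) is the same as the paper's, but your key local claim in step~(ii) is false, and this is not a fixable technicality of "choosing the sign of the new vertex": for a general bistellar move there is no new vertex at all and hence no labelling freedom. A new vertex appears only when $\dim B=0$; for all other moves every vertex of $\partial A\star B$ already carries a label. Concretely, take $n=1$, $A=\{a\}$, $B=\{u,v\}$ with $\lambda(u)=1$, $\lambda(v)=-2$, $\lambda(a)=5$: the old local facets $\{a,u\},\{a,v\}$ contain $0$ positive alternating simplices, while the new facet $\{u,v\}$ is positive alternating, so the positive counts on the two sides of a single $\kappa(A,B)$ differ by $1$. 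What is true (and what the paper proves) is that the \emph{total} number of alternating facets of $\partial(A\star B)$, positive and negative together, is always $0$ or $2$, so the two halves $A\star\partial B$ and $\partial A\star B$ have totals of equal parity; one then uses the antipodal identity $\alpha^{+}_{\lambda'}(\st_{M'}(-B))=\alpha^{-}_{\lambda'}(\st_{M'}(B))$ to convert "total count mod $2$ on one ball" into "positive count mod $2$ on the union of the two antipodal balls". You flag this positive/negative interplay as bookkeeping to get right, but your concrete plan asserts the per-ball, positive-only statement, which the example above refutes; the antipodal cancellation is not optional bookkeeping but the essential mechanism.

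A second gap is in the construction of $\lambda'$. You assume the only way a complementary edge can be created is via the new vertex, but when $\dim B=1$ the move introduces the edge $B=\{u,v\}$ between two \emph{existing} vertices, and nothing in the hypotheses prevents $\lambda(u)+\lambda(v)=0$ (the edge $B$ is not in $M$, so the Fan condition on $M$ says nothing about it). In that case $\lambda$ itself is not a Fan labelling of $M'$ and must be modified; the paper handles this by perturbing $\lambda(u)$ by a small $\varepsilon$ that does not cross any other label in absolute value (its rule (R2)). Your proposal has no mechanism for this case. For the case $\dim B=0$ your "very large label" for the new vertex is a workable alternative to the paper's choice (which duplicates the smallest positive label of $A$), but by itself it does not repair either of the two gaps above.
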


\begin{proof}
It will be convenient to relax the notion of a Fan labelling by allowing labels
to take real values. Any such labelling can easily be converted to one with
integer values.

Let $M$ and $M'$ be combinatorial $\ZZ_2$-manifolds such that $M'$ is obtained
from $M$ by a $\ZZ_2$-bistellar move $\tilde \kappa(A,B)$. Given any Fan
labelling $\lambda$ of $M$, we shall construct a Fan labelling $\lambda'$ of
$M'$ as follows.

Let $u$ be any vertex of $M'$. We may assume without loss of generality that
$\lambda(u)>0$ and $u \notin -B$ (otherwise, we can swap the signs of $u$ and
$-u$, or the signs of $B$ and $-B$, or both). We choose $\lambda'(u)$ according
to rules (R1)--(R3) below, and then we set $\lambda'(-u)=-\lambda'(u)$.
\begin{itemize}
\item[(R1)] If $B=\{u\}$, then $\lambda'(u)=\min\{\lambda(v)>0:v \in A\}$.
\item[(R2)] If $B=\{u,v\}$ and $\lambda(u)+\lambda(v)=0$, then
$\lambda'(u)=\lambda(u)+\varepsilon$, where $\varepsilon>0$ is chosen to be
sufficiently small so that
\[
\{\lambda(w) : w \in V(M)\} \cap
[\lambda(u),\lambda(u)+\varepsilon]=\{\lambda(u)\}.
\]
\item[(R3)] Otherwise, let $\lambda'(u)=\lambda(u)$.
\end{itemize}

Clearly, $\lambda'$ is a Fan labelling of $M'$, and note that there are no
complementary edges in $A \star B$ with respect to $\lambda'$.

To prove the lemma, we will prove that $\Delta \equiv 0 \pmod 2$, where
\begin{equation}\label{eq:delta}
\Delta=\alpha^+_{\lambda'}(M')-\alpha^+_\lambda(M).
\end{equation}

Let $N=M' \setminus (\st_{M'}(B) \cup \st_{M'}(-B)) \cong M \setminus (\st_M(A)
\cup \st_M(-A))$,
and observe that a facet of $N$ is positive (respectively, negative) alternating
with respect to $\lambda'$ if and only if it is positive (respectively,
negative) alternating with respect to $\lambda$; in particular,
$\alpha^+_{\lambda'}(N)=\alpha^+_{\lambda}(N)$. Therefore,
\[
\alpha^+_{\lambda'}(M') =
\alpha^+_{\lambda'}(N)+\alpha^+_{\lambda'}(\st_{M'}(B))+\alpha^+_{\lambda'}%
(\st_{M'}(-B))
\]
and
\[
\alpha^+_{\lambda}(M) =
\alpha^+_{\lambda}(N)+\alpha^+_{\lambda}(\st_M(A))+\alpha^+_{\lambda}(\st_M(-A)),
\]
so substituting into \eqref{eq:delta} and simplifying, we obtain
\begin{align}
\Delta &=
\alpha^+_{\lambda'}(\st_M(B))+\alpha^+_{\lambda'}(\st_M(-B))-\alpha^+_{\lambda}%
(\st_M(A))-\alpha^+_{\lambda}(\st_M(-A))
\nonumber \\
&=
\alpha^+_{\lambda'}(\st_M(B))+\alpha^-_{\lambda'}(\st_M(B))-\alpha^+_{\lambda}%
(\st_M(A))-\alpha^-_{\lambda}(\st_M(A)).
\label{eq:deltaAB}
\end{align}

For the sake of brevity, set $\alpha^+=\alpha^+_{\lambda'}(\partial (A \star
B))$ and $\alpha^-=\alpha^-_{\lambda'}(\partial (A \star B))$.

\begin{claim*}
$(\alpha^+,\alpha^-) \in \left\{(0,0),(1,1),(2,0),(0,2)\right\}$.
\end{claim*}

\begin{proofofclaim*}
Order the labels of $\partial (A \star B)$ by increasing absolute value. If the
labels of $A \star B$  are alternating, then $A \star B$ contains precisely one
positive and one negative alternating $n$-simplex (simply delete the vertex
whose label has the maximum or minimum absolute value). This is the case
$(\alpha^+,\alpha^-)=(1,1)$.

If $A \star B$ contains exactly one pair of consecutive labels of the same sign,
then deleting either one of them results in a positive alternating $n$-simplex
(if the minimum label by absolute value is positive), or a negative alternating
$n$-simplex (if the minimum label by absolute value is negative). Deleting any
other vertex results in a non-alternating $n$-simplex. Therefore, there are
precisely two alternating facets, both positive or both negative. These are the
cases $(\alpha^+,\alpha^-)=(2,0)$ and $(\alpha^+,\alpha^-)=(0,2)$.

If $A \star B$ contains more than one pair of consecutive labels of the same
sign, then $A \star B$ contains no positive or negative alternating simplex;
this is the case $(\alpha^+,\alpha^-)=(0,0)$.
\end{proofofclaim*}

We now compute the value of $\Delta$ for each of the four cases. If
$(\alpha^+,\alpha^-)=(0,0)$, then
\[
\alpha^+_{\lambda'}(\st_M(A))=\alpha^+_{\lambda'}(\st_M(B))=\alpha^-_{\lambda'}%
(\st_M(A))=\alpha^-_{\lambda'}(\st_M(B))=0,
\]
so substituting into \eqref{eq:deltaAB}, we obtain $\Delta=0$.

If $(\alpha^+,\alpha^-) = (1,1)$, then
$\alpha^+_{\lambda'}(\st_{M'}(B))+\alpha^+_{\lambda'}(\st_M(A))=1$ and
$\alpha^-_{\lambda'}(\st_{M'}(B))+\alpha^-_{\lambda'}(\st_M(A))=1$. Substituting
into \eqref{eq:deltaAB}, we obtain
\begin{align*}
\Delta &=
(1-\alpha^+_{\lambda'}(\st_M(A)))+(1-\alpha^-_{\lambda'}(\st_M(A)))-%
\alpha^+_{\lambda'}(\st_M(A))-\alpha^-_{\lambda'}(\st_M(A))
\\
&= 2\left(1-\alpha^+_{\lambda'}(\st_M(A)))-\alpha^-_{\lambda'}(\st_M(A)))\right)
\\
&\equiv 0 \pmod 2.
\end{align*}

If $(\alpha^+,\alpha^-) = (2,0)$, then
$\alpha^+_{\lambda'}(\st_{M'}(B))+\alpha^+_{\lambda'}(\st_M(A))=2$ and
$\alpha^-_{\lambda'}(\st_{M'}(B))=\alpha^-_{\lambda'}(\st_M(A))=0$. Once again,
substituting into \eqref{eq:deltaAB}, we obtain
\begin{align*}
\Delta &= (2-\alpha^+_{\lambda'}(\st_M(A)))-\alpha^+_{\lambda'}(\st_M(A)) \\
&= 2\left(1-\alpha^+_{\lambda'}(\st_M(A))\right) \\
&\equiv 0 \pmod 2.
\end{align*}

The case $(\alpha^+,\alpha^-) = (0,2)$ is symmetric to the case
$(\alpha^+,\alpha^-) = (2,0)$, so $\Delta \equiv 0 \pmod 2$. This completes the
proof of the lemma.
\end{proof}

We are now ready to give a combinatorial proof of Fan's lemma for a class of
centrally symmetric combinatorial spheres.

\begin{theorem}
\label{thm:fan-PL}
Let $M$ be a centrally symmetric combinatorial $n$-sphere combinatorially
$\ZZ_2$-equivalent to the boundary complex of the $(n+1)$-dimensional cross
polytope, and let $\lambda: V(M) \to \{\pm 1,\ldots,\pm m\}$ be a labelling of
the vertices of $M$ such that $\lambda(-v) = -\lambda(v)$ for every vertex $v
\in V(M)$, and $\lambda(u)+\lambda(v) \neq 0$ for every edge $\{u,v\} \in M$.
Then there are an odd number of positive alternating $n$-simplices. In
particular, $m \geq n+1$.
\end{theorem}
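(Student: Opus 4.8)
The plan is to reduce \Cref{thm:fan-PL} to a direct computation on the boundary complex $C$ of the $(n+1)$-dimensional cross polytope, using the $\ZZ_2$-analogue of Pachner's theorem (\Cref{thm:z2-pachner}) together with \Cref{lem:relabel}. First I would observe that a combinatorial $n$-sphere carrying a free simplicial $\ZZ_2$-action is a closed combinatorial $\ZZ_2$-manifold, since the link of every vertex in a combinatorial $n$-sphere is a combinatorial $(n-1)$-sphere; the same is true of $C$ with its antipodal action. Thus $M$ and $C$ are combinatorially $\ZZ_2$-equivalent combinatorial $\ZZ_2$-manifolds, so by \Cref{thm:z2-pachner} they are $\ZZ_2$-bistellar equivalent: there is a finite sequence $M = M_0, M_1, \ldots, M_k = C$ in which each $M_{i+1}$ arises from $M_i$ either by a $\ZZ_2$-bistellar move or by a $\ZZ_2$-isomorphism, and each $M_i$ is again a combinatorial $\ZZ_2$-manifold.

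Next I would transport the labelling along this sequence. Set $\lambda_0 = \lambda$, viewed if convenient as a real-valued Fan labelling in the sense of the proof of \Cref{lem:relabel}. Inductively, if $M_{i+1}$ is obtained from $M_i$ by a $\ZZ_2$-bistellar move, apply \Cref{lem:relabel} to obtain a Fan labelling $\lambda_{i+1}$ of $M_{i+1}$ with $\alpha^+_{\lambda_i}(M_i) \equiv \alpha^+_{\lambda_{i+1}}(M_{i+1}) \pmod 2$; if $M_{i+1}$ is obtained from $M_i$ by a $\ZZ_2$-isomorphism $f$, put $\lambda_{i+1} = \lambda_i \circ f^{-1}$, which is a Fan labelling of $M_{i+1}$ with $\alpha^+_{\lambda_{i+1}}(M_{i+1}) = \alpha^+_{\lambda_i}(M_i)$. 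Chaining these relations gives $\alpha^+_\lambda(M) \equiv \alpha^+_{\lambda_k}(C) \pmod 2$ for some Fan labelling $\lambda_k$ of $C$ (which can be converted back to an integer labelling, though this is not needed below).

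It then remains to show that $\alpha^+_\mu(C) = 1$ for every Fan labelling $\mu$ of $C$. Write the vertices of $C$ as $\pm v_1, \ldots, \pm v_{n+1}$, so that the facets of $C$ are the $2^{n+1}$ sets $\{\epsilon_1 v_1, \ldots, \epsilon_{n+1} v_{n+1}\}$ with $\epsilon_i \in \{+1,-1\}$. The condition $\mu(u)+\mu(w) \neq 0$ over the edges of $C$ forces $|\mu(v_i)| \neq |\mu(v_j)|$ whenever $i \neq j$, so after reindexing we may assume $|\mu(v_1)| < \ldots < |\mu(v_{n+1})|$. For a facet $\{\epsilon_1 v_1, \ldots, \epsilon_{n+1} v_{n+1}\}$, listing its labels by increasing absolute value yields $\epsilon_1 \mu(v_1), \ldots, \epsilon_{n+1} \mu(v_{n+1})$; hence the facet is positive alternating exactly when $\operatorname{sgn}(\epsilon_i \mu(v_i)) = (-1)^{i-1}$ for all $i$, and this determines $\epsilon$ uniquely. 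Therefore $\alpha^+_\mu(C) = 1$, so $\alpha^+_\lambda(M) \equiv 1 \pmod 2$ is odd, and in particular positive; a positive alternating $n$-simplex of $M$ has $n+1$ vertices carrying labels of distinct absolute values in $\{1,\ldots,m\}$, whence $m \geq n+1$.

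I do not expect a deep obstacle here — the substance is in \Cref{thm:z2-pachner} and \Cref{lem:relabel}. The points that require care are essentially bookkeeping: checking that $\ZZ_2$-bistellar moves keep us inside the class of combinatorial $\ZZ_2$-manifolds that are $\ZZ_2$-equivalent to $C$ (so that \Cref{lem:relabel} legitimately applies to each consecutive pair $M_i, M_{i+1}$), handling the $\ZZ_2$-isomorphisms occurring in the sequence, and carrying the real-valued relaxation of Fan labellings consistently through all the moves. The base-case count $\alpha^+_\mu(C) = 1$ is routine. Finally, the hypothesis that $M$ be $\ZZ_2$-equivalent to $C$ is precisely what makes \Cref{thm:z2-pachner} applicable; the extent to which it can be dispensed with, in particular automatically in low dimensions, is the subject of \Cref{sec:conclusion}.
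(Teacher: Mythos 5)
Your proposal is correct and follows essentially the same route as the paper: reduce to the boundary complex of the cross polytope via \Cref{thm:z2-pachner} and \Cref{lem:relabel}, then verify directly that every Fan labelling of that complex has exactly one positive alternating facet. You simply spell out the chaining argument and the base-case count that the paper states in one line.
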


\begin{proof}
Up to permutations of the labels, there is only one Fan labelling of the
boundary complex of the $(n+1)$-dimensional cross polytope, and it has precisely
one positive alternating facet. Therefore, by \Cref{thm:z2-pachner} and
\Cref{lem:relabel}, any Fan labelling of $M$ has an odd number of positive
alternating facets.
\end{proof}

\section{Concluding remarks}
\label{sec:conclusion}

It follows from the work of Radó~\cite{Rad25} and Moise~\cite{Moi52} (see
also~\cite{Moi77}) that, for dimension $n \leq 3$, (i) all triangulations of
$n$-manifolds are combinatorial, and (ii) any two triangulations of the same
$n$-manifold are combinatorially equivalent (this is the famous
\emph{Hauptvermutung}, which fails in higher dimensions). In particular, any
centrally symmetric triangulation $M$ of $S^n$ (with $n \leq 3$) is a centrally
symmetric combinatorial $n$-sphere by (i), so by \Cref{lem:quotient}, the
quotient $\sd(M)/\ZZ_2$ is a combinatorial triangulation of $\RP^n$, and by
(ii), $\sd(M)/\ZZ_2$ is combinatorially equivalent to $\sd(N)/\ZZ_2$, where $N$
is the boundary complex of the $(n+1)$-dimensional cross polytope. By
\Cref{lem:barycentric}, we can conclude that $M$ is a centrally symmetric
combinatorial $n$-sphere combinatorially $\ZZ_2$-equivalent to $N$.

This shows that for $n\leq 3$, the proof of \Cref{thm:fan-PL} works in full
generality, i.e., it proves \Cref{thm:fan} for all centrally symmetric
triangulations of $S^n$. For $n \geq 5$, the above argument breaks down for two
reasons: there exist non-combinatorial triangulations of $S^n$, and there exist
combinatorial triangulations of $\RP^n$ that are not combinatorially equivalent
(i.e., there exist centrally symmetric combinatorial $n$-spheres that are not
combinatorially $\ZZ_2$-equivalent). The case $n=4$ seems to be open; while
every triangulation of $S^4$ is combinatorial by the Poincaré Conjecture, we
have not found any reference proving or disproving that $\RP^4$ has a unique PL
structure.

Finally, we also ought to point out that all centrally symmetric triangulations
that are subdivisions of the boundary complex of the cross polytope satisfy the
hypothesis of \Cref{thm:fan-PL}. It is unclear what relation, if any, there is
between centrally symmetric triangulations of the sphere satisfying the
hypothesis of \Cref{thm:fan-PL} and those containing a flag of hemispheres.

\bibliographystyle{plain}
\bibliography{PL-fan}
\end{document}